\title{Membership and Elasticity in Certain Affine Monoids}
\author{Jackson Autry\thanks{
                  San Diego State University}
        \and
        Vadim Ponomarenko\thanks{
                      San Diego State University, corresponding author
                  }
        }
\newtheorem{theorem}{Theorem}
\newtheorem{lemma}[theorem]{Lemma}
\newtheorem{example}[theorem]{Example}
\newtheorem{corollary}[theorem]{Corollary}
\newtheorem{proposition}[theorem]{Proposition}
\renewenvironment{proof}{{\sc Proof:}}{~\hfill QED}
\begin{document}
\newpage
\maketitle
\begin{abstract}
    For affine monoids of dimension 2 with embedding dimension 2 and 3, we study the problem of determining when a vector is an element of the monoid, and the problem of determining the elasticity of a monoid element.
\end{abstract}



\newcommand{\A}{\mathcal{A}}
\newcommand{\C}{\mathbb{C}}
\newcommand{\B}{\mathcal{B}}
\newcommand{\D}{\mathcal{D}}
\newcommand{\h}{\mathcal{H}}
\newcommand{\II}{\mathcal{I}}
\renewcommand{\S}{\mathcal{S}}
\newcommand{\Z}{\mathbb{Z}}
\newcommand{\N}{\mathbb{N}}
\newcommand{\R}{\mathbb{R}}
\newcommand{\Q}{\mathbb{Q}}
\newcommand{\LL}{\mathcal{L}}
\newcommand{\RR}{\mathcal{R}}
\newcommand{\F}{\mathcal{F}}
\newcommand{\cl}{\operatorname{cl}}
\newcommand{\supp}{\operatorname{supp}}
\newcommand{\lcm}{\operatorname{lcm}}
\newcommand{\ran}{\operatorname{ran}}
\newcommand{\norm}[1]{\| #1 \|}
\newcommand{\inner}[1]{\langle #1 \rangle}
\renewcommand{\vec}[1]{{\bf #1}}
\newcommand{\vect}[2]{\left[\begin{smallmatrix}#1\\#2\end{smallmatrix}\right]}
\newcommand{\mush}[2]{\left[\begin{smallmatrix}#1 & #2\end{smallmatrix}\right]}
\newcommand{\smush}[1]{\left[\begin{smallmatrix}#1\end{smallmatrix}\right]}
\newcommand{\<}{\langle}
\renewcommand{\>}{\rangle}
\def\multichoose#1#2{\ensuremath{\left(\kern-.3em\left(\genfrac{}{}{0pt}{}{#1}{#2}\right)\kern-.3em\right)}}

\section{Introduction}
Let $\N$ denote the set of positive integers, $\N_0$ denote the set of nonnegative integers, and $\Q^\star$ denote the set of nonnegative rational numbers adjoined with $+\infty$. An \emph{affine monoid}, $S$, is a finitely generated submonoid of $\N_0^r$, with operation $+$, for some positive integer $r$. They are of substantial interest (see, e.g., \cite{MR3741074,MR3579669,Unique_Betti}). In the remainder, we restrict to the case $r=2$.  Any affine monoid is cancellative ($\mathbf{a} + \mathbf{b} = \mathbf{a} + \mathbf{c}$ implies $\mathbf{b} = \mathbf{c}$), reduced (its only unit is $0$, the identity element), and torsion free ($k\mathbf{a} = k\mathbf{b}$ for $k \in \N$ implies $\mathbf{a}= \mathbf{b}$). Let $S$ be an affine monoid minimally generated by $\A := \{\mathbf{a}_1, \ldots, \mathbf{a}_p\} \subset \N_0^r$, that is to say $S = \langle \mathbf{a}_1,\mathbf{a}_2,\ldots, \mathbf{a}_p\rangle  := \N_0\mathbf{a}_1 + \cdots + \N_0\mathbf{a}_p$ and no proper subset of $\A$ generates $S$. We say the \emph{embedding dimension} of $S$ is $p$. 
For a general introduction to monoids and their invariants, see \cite{Overview_Invariants}.

The monoid map
$$\pi_{\A} : \N_0^p \longrightarrow S; \mathbf{u} = (u_1,\ldots,u_p) \longmapsto \sum_{i = 1}^p u_i\mathbf{a}_i$$
is sometimes known as the \emph{factorization homomorphism} associated to $\A$, and if $\pi_{\A}(\mathbf{u}) =s$, $\mathbf{u}$ is called a \emph{factorization} of $s$. For every $s\in S$, the set $\mathsf{Z}(s):= \pi_{\A}^{-1}(s)$ is called the \emph{set of factorizations} of $s$. Given $s \in S$, for $\mathbf{u} = (u_1,\ldots,u_p) \in \mathsf{Z}(s)$, define the \emph{length} of the factorization $\mathbf{u}$, to be $\vert \mathbf{u} \vert = u_1 + \cdots + u_p$, and define the \emph{set of lengths} of $s$ as $\mathsf{L}(s) = \{ \vert \mathbf{u} \vert : \mathbf{u} \in \mathsf{Z}(\mathbf{a})\}$. Define the \emph{elasticity} of $s\in S$ as $\rho(s) = \frac{\max(\mathsf{L}(s))}{\min(\mathsf{L}(s))}$, and the \emph{elasticity} of $S$ to be $\rho(S) = \sup \{\rho(s): s\in S \setminus \{0\}\}$.  The elasticity is a very important monoid invariant (see, e.g., \cite{MR3602830,MR3255016,MR3035125,MR3503387}).

The monoid elasticity $\rho(S)$ for affine monoids is known (see, e.g., \cite{philipp_2010}).  In this note, our main tool will be the function $\phi:\mathbb{Z}^2 \to \Q^\star$ given by $\phi: \vect{a}{b} \mapsto \frac{a}{b}$, with $\frac{a}{0}$ conventionally taken to be $+\infty$. Our main focus will be $S\subseteq\N_0^2$, with embedding dimension $2$ and $3$.

We will compute the elasticity of individual monoid elements. We also provide membership tests for arbitrary elements of $\N_0^2$.  We will show that for a given $s\in \N_0^2$, membership in $S$ and $\rho(s)$ are largely determined by $\phi(s)$. 

\section{Preliminaries}

We begin with the observation that $\Q^\star$ is ordered, and the semigroup operation (commonly known as the mediant) preserves this order.  This property is well-known; its proof is included for completeness.

\begin{lemma}\label{mediant} Let $a,b,c,d\in\N_0$ with $\phi(\vect{a}{b})< \phi(\vect{c}{d})$.  Then $$\phi(\vect{a}{b})< \phi(\vect{a+c}{b+d})< \phi(\vect{c}{d}).$$
\end{lemma}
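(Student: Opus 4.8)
The plan is to reduce the two claimed inequalities to a single cross‑multiplied inequality, after first disposing of the cases where a denominator vanishes. The starting observation is that the hypothesis $\phi(\vect{a}{b}) < \phi(\vect{c}{d})$ already forces $b \geq 1$: since $+\infty$ is the maximum of $\Q^\star$ and $\phi(\vect{a}{b})$ lies strictly below $\phi(\vect{c}{d}) \leq +\infty$, the value $\phi(\vect{a}{b}) = a/b$ must be finite, hence $b \neq 0$. In particular the mediant $\vect{a+c}{b+d}$ also has positive second coordinate, so $\phi(\vect{a+c}{b+d})$ is finite.

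If $d \geq 1$, then $b$, $d$, and $b+d$ are all positive, so the hypothesis $a/b < c/d$ is equivalent to $bc - ad > 0$. The two desired inequalities then follow from the identities
$$\phi(\vect{a+c}{b+d}) - \phi(\vect{a}{b}) = \frac{bc-ad}{b(b+d)}, \qquad \phi(\vect{c}{d}) - \phi(\vect{a+c}{b+d}) = \frac{bc-ad}{d(b+d)},$$
whose right-hand sides are both positive since $bc-ad>0$ and the denominators are products of positive integers. This is essentially the entire computational content of the lemma.

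It remains to treat $d = 0$, where $\phi(\vect{c}{d}) = +\infty$; here the vector $\vect{c}{d}$ is nonzero (as in all intended applications), so $c \geq 1$. Then the mediant equals $\vect{a+c}{b}$ with $b \geq 1$, so $\phi(\vect{a+c}{b}) = \frac{a+c}{b}$ is finite and thus strictly less than $+\infty = \phi(\vect{c}{d})$; and $\frac{a+c}{b} > \frac{a}{b} = \phi(\vect{a}{b})$ because $c \geq 1$. I do not anticipate any genuine obstacle here — the mediant inequality is a direct calculation — so the only point requiring care is the bookkeeping around the value $+\infty$: using the hypothesis to exclude $b = 0$, and isolating the $d = 0$ case rather than attempting to cross-multiply by a zero denominator.
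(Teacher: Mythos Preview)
Your proof is correct and follows essentially the same route as the paper: cross-multiply the hypothesis to obtain $bc-ad>0$ and read off each inequality from the resulting positive difference, which is exactly what the paper does (it adds $ab$, respectively $cd$, to $ad<bc$ and divides). You are in fact more careful than the paper, which dismisses $bd=0$ as the ``trivial'' case without comment; you correctly extract $b\ge 1$ from the hypothesis and handle $d=0$ separately, even flagging that the degenerate vector $\vect{0}{0}$ must be excluded.
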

\begin{proof} We prove only the nontrivial case $bd\neq 0$.  Then $ad< bc$ by hypothesis.  If we add $ab$ to both sides and divide by $b(b+d)$, we conclude $\frac{a}{b}< \frac{a+c}{b+d}$ which gives the first inequality.  If we instead add $cd$ to both sides and divide by $d(b+d)$, we get the second inequality. 
\end{proof}\\

\begin{corollary}\label{mediant-cor} Let $u,v\in\N_0^2$ with $\phi(u)< \phi(v)$.  Let $s\in\langle u,v\rangle$.  Then $\phi(u)\le \phi(s)\le \phi(v)$. 
\end{corollary}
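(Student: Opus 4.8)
The plan is to use the fact that every $s\in\langle u,v\rangle$ can be written $s=mu+nv$ with $m,n\in\N_0$, and to reduce the claim to Lemma~\ref{mediant} together with one elementary observation: $\phi$ is invariant under scaling by a positive integer, i.e.\ $\phi(kw)=\phi(w)$ for every $w\in\N_0^2$ and $k\in\N$. This is immediate from the definition of $\phi$, and it holds whether or not the second coordinate of $w$ is $0$, since $\tfrac{ka}{0}=+\infty=\tfrac a0$.

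So write $s=mu+nv$ with $m,n\in\N_0$; we may assume $(m,n)\neq(0,0)$ (when $m=n=0$ we have $s=0$, which we set aside). If $n=0$, then $s=mu$ with $m\ge 1$, so $\phi(s)=\phi(u)$ by the scaling observation, and $\phi(u)\le\phi(s)\le\phi(v)$ holds, the left inequality with equality and the right one because $\phi(u)<\phi(v)$ by hypothesis; the case $m=0$ is symmetric. If instead $m\ge 1$ and $n\ge 1$, write $u=\vect ab$ and $v=\vect cd$, so that $mu=\vect{ma}{mb}$ and $nv=\vect{nc}{nd}$ have entries in $\N_0$. By the scaling observation, $\phi(mu)=\phi(u)<\phi(v)=\phi(nv)$, so Lemma~\ref{mediant} applied to the quadruple $(ma,mb,nc,nd)$ gives
$$\phi(u)=\phi\!\left(\vect{ma}{mb}\right)<\phi\!\left(\vect{ma+nc}{mb+nd}\right)<\phi\!\left(\vect{nc}{nd}\right)=\phi(v),$$
and the middle vector is exactly $mu+nv=s$. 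This yields $\phi(u)\le\phi(s)\le\phi(v)$ in all cases.

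I do not expect a genuine obstacle here: the argument is a short case analysis driven entirely by Lemma~\ref{mediant}. The only point needing a little care is bookkeeping with the $+\infty$ convention — namely the possibility that $u$ or $v$ has a zero second coordinate — but Lemma~\ref{mediant} is already stated for arbitrary $a,b,c,d\in\N_0$, so it absorbs the sole borderline subcase $\phi(v)=+\infty$, and nothing further is required.
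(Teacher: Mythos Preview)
Your proof is correct and follows essentially the same approach as the paper's: both derive the corollary directly from Lemma~\ref{mediant}, observing that the only way the strict inequalities there relax to non-strict ones is when $s$ is a positive multiple of $u$ or of $v$. The paper compresses this into a single sentence (``Strict inequality is lost if $s=u+u$ or similar''), while you spell out the case split and the scaling invariance $\phi(kw)=\phi(w)$ explicitly; the underlying idea is identical.
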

\begin{proof} Strict inequality is lost if $s=u+u$ or similar.\end{proof}\\

Let $GL(2)$ denote the set of $2\times 2$ unimodular matrices (i.e. with determinant $\pm 1$), with entries from $\mathbb{Z}$.  Let $\mush{u}{v}$ denote the $2\times 2$ matrix whose first column is $u$, and whose second column is $v$.  Let $\smush{\A}$ denote a similar matrix whose columns are the monoid generators.

\begin{corollary}\label{matrix-cor} Let $u,v\in\N_0^2$ with $\phi(u)< \phi(v)$.  Let $s\in\langle u,v\rangle$. Let $A\in GL(2)$.  Suppose that $Au, Av\in \N_0^2$.  Then $As\in \langle Au, Av\rangle$, and either $\phi(Au)\le \phi(As)\le \phi(Av)$ or $\phi(Av)\le \phi(As)\le \phi(Au)$.
\end{corollary}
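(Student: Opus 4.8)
The plan is to reduce everything to Corollary \ref{mediant-cor} by pushing it through the linear map $A$. For the membership assertion, write $s=\alpha u+\beta v$ with $\alpha,\beta\in\N_0$ (possible since $s\in\langle u,v\rangle$); applying $A$ gives $As=\alpha(Au)+\beta(Av)$, and since $Au,Av\in\N_0^2$ by hypothesis this already exhibits $As$ as a nonnegative integer combination of $Au$ and $Av$, so $As\in\langle Au,Av\rangle$ (and in particular $As\in\N_0^2$).

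For the ordering assertion, the crux is that $A$, being unimodular, cannot collapse the directions of $u$ and $v$. By multiplicativity of the determinant, $\det\mush{Au}{Av}=\det(A)\cdot\det\mush{u}{v}=\pm\det\mush{u}{v}$. From $\phi(u)<\phi(v)$ one checks that $u\neq0$ and, as long as $v\neq0$, that $\det\mush{u}{v}\neq0$; the case $v=0$ is degenerate and trivial, since then $\langle u,v\rangle=\langle u\rangle$, so $s$ is a multiple of $u$ and $\phi(As)\in\{\phi(Au),\phi(Av)\}$. Assuming $v\neq0$, then, $\det\mush{Au}{Av}\neq0$, so $Au$ and $Av$ are linearly independent — in particular both nonzero and not parallel — whence $\phi(Au)\neq\phi(Av)$. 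Since $\Q^\star$ is totally ordered, exactly one of $\phi(Au)<\phi(Av)$, $\phi(Av)<\phi(Au)$ holds; in the first case Corollary \ref{mediant-cor} applied to $(Au,Av,As)$ gives $\phi(Au)\le\phi(As)\le\phi(Av)$, and in the second the same corollary applied to $(Av,Au,As)$ gives $\phi(Av)\le\phi(As)\le\phi(Au)$.

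The step I expect to require the most care is the inference from $\det\mush{Au}{Av}\neq 0$ to $\phi(Au)\neq\phi(Av)$, which has to be verified directly against the convention $\tfrac a0=+\infty$ — in particular in the case where one of the two values equals $+\infty$ — and the various zero inputs ($v=0$, and $s=0$) are best disposed of at the outset. Everything else is routine bookkeeping around Corollary \ref{mediant-cor}.
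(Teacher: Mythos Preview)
Your proof is correct and follows essentially the same approach as the paper: membership via linearity of $A$, then the ordering via Corollary \ref{mediant-cor} applied with the roles of $Au,Av$ determined by which has the smaller $\phi$-value. You are in fact more careful than the paper, which simply splits on whether $\phi(Au)\le\phi(Av)$ or $\phi(Au)\ge\phi(Av)$ without justifying that the equality case is harmless; your determinant argument showing $\phi(Au)\neq\phi(Av)$ fills that small gap.
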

\begin{proof} Since $s\in \langle u,v\rangle$, there is some vector $w$ with $\mush{u}{v}\smush{w}=\smush{s}$.  Then $A\mush{u}{v}\smush{w}=A\smush{s}$, hence $\mush{Au}{Av}\smush{w}=\smush{As}$.  Hence $As\in \langle Au, Av\rangle$.  We apply Corollary \ref{mediant-cor} in one of two ways, depending on whether $\phi(Au)\le \phi(Av)$ or $\phi(Au)\ge \phi(Av)$.
\end{proof}\\

Given some $u=\vect{a}{b}\in\N_0^2$, we say that it is $\phi$-minimal if $\gcd(a,b)=1$; otherwise we could take a smaller $u'=\vect{a/\gcd(a,b)}{b/\gcd(a,b)}$ with $\phi(u)=\phi(u')$.  Henceforth we assume that all of our monoid generators are $\phi$-minimal.

\begin{lemma}\label{phi-equal} Let $u=\vect{a}{b},v=\vect{c}{d}\in \N_0^2$.  Suppose that both are $\phi$-minimal and $\phi(u)=\phi(v)$. Then $u=v$.
\end{lemma}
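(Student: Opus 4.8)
The plan is a short case analysis according to whether the denominators $b$ and $d$ vanish. First I would dispose of the degenerate case. If $b=0$, then $\phi(u)=+\infty$, and since a quotient $c/d$ with $d\neq 0$ is a genuine nonnegative rational, $\phi(v)=+\infty$ forces $d=0$ as well. Now $\phi$-minimality of $u=\vect{a}{0}$ says $\gcd(a,0)=a=1$, and likewise $c=1$, so $u=\vect{1}{0}=v$. By the symmetric argument the case $d=0$ is handled identically; note also that if $a=0$ with $b\neq 0$ then $\phi$-minimality forces $b=1$, and the same reasoning applied to the value $0\in\Q^\star$ pins $u=\vect{0}{1}$, so these corner cases cause no trouble.

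It then remains to treat $bd\neq 0$, where $\phi(u)=\phi(v)$ is the honest identity $\frac{a}{b}=\frac{c}{d}$, equivalently $ad=bc$. The key step is to extract $b=d$ from coprimality: since $b\mid bc=ad$ and $\gcd(a,b)=1$, we get $b\mid d$; the symmetric argument gives $d\mid b$; and since $b,d$ are positive integers, $b=d$. Substituting back into $ad=bc$ and cancelling $b\neq 0$ yields $a=c$, hence $u=v$.

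The argument is entirely routine, so I do not expect a genuine obstacle; the only point requiring care is the bookkeeping around the $+\infty$ convention, namely not silently assuming denominators are nonzero and observing that $\phi$-minimality singles out $\vect{1}{0}$ (resp.\ $\vect{0}{1}$) as the unique $\phi$-minimal preimage of $+\infty$ (resp.\ $0$). An equivalent packaging would be to note that $\phi$, restricted to $\phi$-minimal vectors of $\N_0^2$, is precisely the classical bijection between reduced nonnegative fractions (with $1/0$ read as $+\infty$) and $\Q^\star$; but the direct case analysis above seems the cleanest route.
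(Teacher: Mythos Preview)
Your proof is correct and follows essentially the same approach as the paper: a case split on whether a denominator vanishes, followed by a mutual-divisibility argument from $ad=bc$ and coprimality. The only cosmetic difference is that the paper first deduces $a\mid c$ and $c\mid a$ (hence $a=c$, then $b=d$), whereas you first deduce $b\mid d$ and $d\mid b$; these are symmetric variants of the same idea.
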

\begin{proof} If $bd=0$, then $b=d=0$ and $a=c=1$; hence, $u=v$.  Otherwise $ad=bc$.  Since $\gcd(a,b)=1$, $a|c$.  Since $\gcd(c,d)=1$, $c|a$.  Since $a,c\in\N_0$, $a=c$.  Similarly, $b=d$.\end{proof}\\

Since all monoid generators are distinct, by Lemma \ref{phi-equal}, they must also have distinct $\phi$-values.  
Henceforth, we may assume, without loss of generality, that our monoid generators are given in strictly increasing $\phi$ order.  

We now recall Hermite Normal Form, an analog of row echelon form for matrices over non-fields like $\mathbb{Z}$.  For every rectangular matrix $M$ with integer entries, there is an associated square unimodular matrix $U$ such that $UM$ is (a) upper triangular; and (b) the pivot in each nonzero row is strictly to the right of the previous row; and (c) all entries of $M$ are nonnegative integers. 
For an introduction to these and other properties of HNF, see \cite{MR1181420}.

Now, for $M=\mush{u}{v}$, applying HNF we have the first column of $UM$ as $\vect{g}{0}$, where $g$ is the $\gcd$ of the entries of $u$.  Since $u$ is $\phi$-minimal, $g=1$.  Hence, we have $UM=\left[\begin{smallmatrix}1&b\\0&a\end{smallmatrix}\right]$, with $a,b\in\N_0$.  We now consider a  row-swapped HNF, defined as $U'=\left[\begin{smallmatrix}0&1\\1&0\end{smallmatrix}\right]U$, so $U'M=\left[\begin{smallmatrix}0&a\\1&b\end{smallmatrix}\right]$.  Note that $U'u, U'v\in \N_0^2$, so by Corollary \ref{matrix-cor}, if $s\in \langle u,v\rangle$ then $\phi(U'u)\le \phi(U's)\le \phi(U'v)$.  Further, note that $0=\phi(U'u)$ and $\phi(U'v)>0$.  Henceforth we will assume without loss of generality that our first generator is $\vect{0}{1}$.

We now recall Smith Normal Form, a non-field analog of the linear algebra theorem giving invertible $U,V$ with $UMV=\left[\begin{smallmatrix}I&0\\0&0\end{smallmatrix}\right]$, a block matrix. For any rectangular matrix $M$ with integer entries, there are associated square unimodular matrices $U,V$ such that $UMV=\left[\begin{smallmatrix}D&0\\0&0\end{smallmatrix}\right]$, where $D=diag(d_1, d_1d_2, \ldots, d_1d_2\cdots d_k)$.  Of particular interest to us are the $d_i$, the so-called determinantal divisors of $M$, which satisfy that $d_i$ is the gcd of all the $i\times i$ minors of $M$.  For example, $d_1(M)$ is the gcd of all the entries of $M$.

The determinantal divisors of $M$ are not disturbed upon multiplication (on either side) by any unimodular matrix.    Further, they are not disturbed by appending a column that is a $\mathbb{Z}$-linear combination of the other columns.  For an introduction to these and other properties of SNF, see \cite{MR1694173} or \cite{MR1181420}.

Given a single generator $u$, because we have assumed it is $\phi$-minimal, the determinantal divisor $d_1(\smush{u})=1$.  Consequently, for any invertible $U'$, we must have $d_1(\smush{U'u})=1$.  In particular, applying our row-swapped HNF preserves $\phi$-minimality.

We provide our first membership test for our affine monoid, of arbitrary embedding dimension.

\begin{lemma} Let $S=\langle \A\rangle$, and let $v\in \N_0^2$. Set $M=\smush{\A}$ and $M'=\mush{\A}{v}$.  If $d_2(M)\neq d_2(M')$, then $v\notin S$.
\end{lemma}
\begin{proof} If $v\in S$, then removing the last column of $M'$ (which gives $M$) will not change the determinantal divisors.\end{proof}\\

\section{Embedding Dimension 2}

In this section, we fix the case of $S=\langle u,v\rangle$, with $u=\vect{0}{1}, v=\vect{a}{b}$, and $\gcd(a,b)=1$.  Note that $d_2(\mush{u}{v})=a$.  Consider some $s=\vect{x}{y}\in\N_0^2$.  We have proved that if $s\in S$, then $0\le\phi(s)\le\frac{a}{b}$, and that $d_2(\mush{u}{v&s})=d_2(\mush{u}{v})=a$.  It turns out that these two  necessary conditions for membership are sufficient.

\begin{theorem} With notation as above, $s\in S$ if and only if both of the following hold:\begin{enumerate} \item $0\le\frac{x}{y}\le \frac{a}{b}$; and \item $a|x$.\end{enumerate}
Further, if $s\in S$, then $\rho(s)=1$.
\end{theorem}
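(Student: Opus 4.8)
The plan is to show that conditions (1) and (2) are each necessary, then that together they are sufficient, and finally to observe that in embedding dimension $2$ every monoid element has a unique factorization, which forces $\rho(s)=1$. Throughout we use that $a\ge 1$: since $\gcd(a,b)=1$, if $a=0$ then $b=1$ and $v=\vect{0}{1}=u$, contradicting that $u\ne v$ are distinct minimal generators; in particular $\phi(u)=0<\frac{a}{b}=\phi(v)$.

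\emph{Necessity.} If $s\in S$, write $s=mu+nv$ with $m,n\in\N_0$. Comparing coordinates, $\vect{x}{y}=\vect{na}{m+nb}$, so $x=na$ and hence $a\mid x$, which is condition (2). Since $s\in\langle u,v\rangle$ and $\phi(u)<\phi(v)$, Corollary \ref{mediant-cor} gives $0=\phi(u)\le\phi(s)\le\phi(v)=\frac{a}{b}$, which is condition (1).

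\emph{Sufficiency.} Assume (1) and (2); we may assume $s\ne 0$, as $0\in S$ trivially. By (2) write $x=na$ with $n\in\N_0$ (here $n\ge 0$ because $x\ge 0$ and $a\ge 1$), and set $m=y-nb$; it suffices to show $m\in\N_0$, since then $s=mu+nv\in S$. If $b=0$ then $m=y\ge 0$ (and in fact $a=1$, $S=\N_0^2$). If $b\ge 1$ and $y\ge 1$, then (1) gives $bx\le ay$, i.e.\ $a(nb)\le ay$, and dividing by $a\ge 1$ yields $nb\le y$, so $m\ge 0$. The only remaining case is $b\ge 1$, $y=0$; but then $s\ne 0$ forces $x\ge 1$, so $\phi(s)=+\infty>\frac{a}{b}$, contradicting (1), so this case does not arise. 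I expect these degenerate situations with zero entries (and the convention $\frac{a}{0}=+\infty$) to be the only spots requiring care; the substantive step is the one-line division.

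\emph{Elasticity.} Since $\det\mush{u}{v}=0\cdot b-a\cdot 1=-a\ne 0$, the linear map $\mush{u}{v}:\Z^2\to\Z^2$ is injective, hence so is its restriction $\pi_{\{u,v\}}:\N_0^2\to S$. Thus each $s\in S$ has exactly one factorization, namely the pair $(m,n)$ produced above, so $\mathsf{Z}(s)$, and therefore $\mathsf{L}(s)$, is a singleton; hence $\rho(s)=\max\mathsf{L}(s)/\min\mathsf{L}(s)=1$.
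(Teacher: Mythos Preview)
Your proof is correct and follows essentially the same approach as the paper: necessity via Corollary~\ref{mediant-cor} and the factorization equations, sufficiency by checking that the coefficient of $u$ is nonnegative, and uniqueness of the factorization forcing $\rho(s)=1$. The only cosmetic differences are that you read off $a\mid x$ directly from $x=na$ (the paper instead invokes the determinantal divisor $d_2$), you handle the degenerate cases $b=0$ and $y=0$ more explicitly, and you phrase uniqueness via the nonzero determinant of $\mush{u}{v}$ rather than the paper's back-substitution argument.
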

\begin{proof} Suppose first that $s\in \langle u,v\rangle$.  By Corollary \ref{mediant-cor}, $\phi(u)\le \phi(s)\le \phi(v)$.  Note that $d_2(\left[\begin{smallmatrix}0&a\\1&b\end{smallmatrix}\right])=a$, as the $2\times 2$ minor is $-a$.  Note also that one of the $2\times 2$ minors of $\mush{\A}{s}$   has determinant $-x$, so we must have $a|x$.

Suppose now that the two conditions hold, i.e. there is some $k\in\mathbb{N}_0$ with $x=ka$.  If $k=0$, then $s=y\vect{0}{1}$.  No other factorization is possible, as even one copy of $v$ will disturb the $0$.

Otherwise, since $\frac{x}{y}\le \frac{ka}{kb}=\frac{x}{kb}$, we must have $y\ge kb$.  Hence we may write $\vect{x}{y}=k\vect{a}{b}+(y-kb)\vect{0}{1}$, which proves $s\in \langle u,v\rangle$. No other factorization is possible, by a back-substitution-type argument:  $u$ does not affect the first coordinate, so we must have $k$ copies of $v$ and hence $y-kb$ copies of $u$. \end{proof}\\

This provides an alternate proof of the well-known fact that in embedding dimension 2,  $\rho(S)=1$.

\section{Embedding Dimension 3}

We turn now to the case of embedding dimension $3$.    Henceforth, we fix the case of $S=\langle u,v,w\rangle$, with $u=\vect{0}{1}, v=\vect{a}{b}, w=\vect{c}{d}$, $\phi(u)<\phi(v)<\phi(w)$, and $\gcd(a,b)=1=\gcd(c,d)$.   Set $M=\left[\begin{smallmatrix}0&a&c\\1&b&d\end{smallmatrix}\right]$.  We will also fix $s=\vect{x}{y}\in\N_0^2$.

We first offer a simple way to compute the determinantal divisor $d_2$ below.

\begin{lemma} With notation as above, $d_2(M)=\gcd(a,c)$.
\end{lemma}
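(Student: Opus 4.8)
The plan is to compute $d_2(M)$, the gcd of all $2\times 2$ minors of the matrix $M=\left[\begin{smallmatrix}0&a&c\\1&b&d\end{smallmatrix}\right]$, directly from the definition, and show it equals $\gcd(a,c)$. The three $2\times 2$ minors are obtained by deleting one column at a time: deleting the third column gives $\det\left[\begin{smallmatrix}0&a\\1&b\end{smallmatrix}\right]=-a$; deleting the second column gives $\det\left[\begin{smallmatrix}0&c\\1&d\end{smallmatrix}\right]=-c$; and deleting the first column gives $\det\left[\begin{smallmatrix}a&c\\b&d\end{smallmatrix}\right]=ad-bc$. So $d_2(M)=\gcd(a,\,c,\,ad-bc)$.

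The key step is then to observe that $\gcd(a,c)$ already divides $ad-bc$: since $\gcd(a,c)\mid a$ it divides $ad$, and since $\gcd(a,c)\mid c$ it divides $bc$, hence it divides their difference $ad-bc$. Therefore the third minor contributes nothing new, and $\gcd(a,c,ad-bc)=\gcd(a,c)$, which is the claim. Alternatively — and perhaps cleaner to state — one can invoke the remark made earlier in the paper that determinantal divisors are unchanged when one appends a column that is a $\mathbb{Z}$-linear combination of the others: here the first column $\vect{0}{1}$ together with $\vect{a}{b},\vect{c}{d}$ generate a lattice, but more to the point, removing the third column from $M$ leaves $\left[\begin{smallmatrix}0&a\\1&b\end{smallmatrix}\right]$ with $d_2=a$ and removing the second leaves $\left[\begin{smallmatrix}0&c\\1&d\end{smallmatrix}\right]$ with $d_2=c$; since $d_2(M)$ must divide the $d_2$ of any such column-deleted submatrix's minors appropriately, one sees $d_2(M)\mid\gcd(a,c)$, and conversely $\gcd(a,c)$ divides every $2\times2$ minor by the computation above. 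I would probably present the first, elementary version since it is self-contained.

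There is no real obstacle here; the only thing to be careful about is the sign/convention bookkeeping (the minors come out as $-a$, $-c$, $ad-bc$, but gcd ignores signs) and making sure the reader sees that the presence of the zero and the one in the first column is exactly what forces the first two minors to be $\pm a$ and $\pm c$ rather than something more complicated. I would write it as a three-line computation followed by the one-line divisibility observation.
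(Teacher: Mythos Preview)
Your proposal is correct and essentially matches the paper's approach: both arguments identify the two minors involving the first column as $\pm a$ and $\pm c$, and then observe that $\gcd(a,c)$ divides the remaining minor $ad-bc$. The paper phrases this last step as ``$\gcd(a,c)$ divides each entry of the first row of $M$, hence each $2\times 2$ minor,'' which is exactly your divisibility observation repackaged.
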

\begin{proof} Since $\gcd(a,c)$ divides each entry of the first row of each $2\times 2$ submatrix, it divides each minor.  Hence $\gcd(a,c)|d_2(M)$.  Considering the submatrices $\left[\begin{smallmatrix}0&a\\1&b\end{smallmatrix}\right]$ and $\left[\begin{smallmatrix}0&c\\1&d\end{smallmatrix}\right]$, we find that $d_2(M)$ divides each of $a,c$.  Hence $d_2(M)|\gcd(a,c)$. \end{proof}\\

Similarly to the embedding dimension 2 case, if $s\in S$, we must have $0\le \phi(s)\le \frac{c}{d}$, and $d_2(\mush{M}{s})=d_2(\smush{M})=\gcd(a,c)$.  Further, we must have $x\in \langle a,c\rangle$, since only $v,w$ have nonzero first coordinates to contribute to $x$.  Unfortunately, in general these necessary conditions are not sufficient, as the following example demonstrates.

\begin{example}\label{sad}
Consider $u=\vect{0}{1}, v=\vect{11}{10}, w=\vect{10}{3}, s=\vect{199}{119}$.  Note that $\phi(s)<2<\phi(w)$, and that $d_2(\mush{M}{s})=d_2(\smush{M})=1$. $199$ can be factored (uniquely) in $\langle 11,10\rangle$ as $199=9\cdot 11+10\cdot 10$.  However, $9v+10w=\vect{199}{120}$.  Including $u$'s will not help, so $s\notin S$.
\end{example}

If $x\in \langle a,c\rangle$, then we can impose a restriction on its representation, as follows.

\begin{proposition}\label{alpha} Let $a,c\in \mathbb{N}$ with $\gcd(a,c)=1$.  If $x\in \langle a,c\rangle$, then there are $\alpha, \beta\in\N_0$ with $x=\alpha a + \beta c$ and $0\le \alpha < c$.
\end{proposition}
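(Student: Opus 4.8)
The plan is to start from any representation $x = \alpha' a + \beta' c$ with $\alpha', \beta' \in \N_0$, which exists by hypothesis since $x \in \langle a, c\rangle$, and then reduce the coefficient of $a$ modulo $c$ by trading copies of $a$ for copies of $c$. Concretely, write $\alpha' = qc + \alpha$ with $0 \le \alpha < c$ via the division algorithm. Then $x = \alpha a + (qa + \beta')c$, so setting $\beta = qa + \beta'$ gives $x = \alpha a + \beta c$ with $0 \le \alpha < c$ as desired.

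The one thing that needs checking is that the new coefficient $\beta = qa + \beta'$ is still a nonnegative integer, but this is immediate: $q = \lfloor \alpha'/c \rfloor \ge 0$ since $\alpha' \ge 0$, and $a, \beta' \in \N_0$, so $\beta \in \N_0$. No use of $\gcd(a,c)=1$ is actually needed for this particular statement — that hypothesis is presumably there for uniqueness-type consequences used later — so I would not invoke it.

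There is essentially no obstacle here; the argument is a one-line application of the division algorithm. The only subtlety worth a remark is the edge case $x = 0$, where one takes $\alpha = \beta = 0$, and the case where the original representation already has $\alpha' < c$, where $q = 0$ and nothing changes. I would present the proof in two sentences: invoke the existence of a representation, then apply division of the $a$-coefficient by $c$ and collect terms.

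\begin{proof} Since $x\in \langle a,c\rangle$, write $x=\alpha' a+\beta' c$ with $\alpha',\beta'\in\N_0$.  By the division algorithm, $\alpha'=qc+\alpha$ for some $q\in\N_0$ and $0\le \alpha<c$.  Then $x=\alpha a+(qa+\beta')c$, and setting $\beta=qa+\beta'\in\N_0$ gives $x=\alpha a+\beta c$ with $0\le \alpha<c$.
\end{proof}
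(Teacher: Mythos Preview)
Your proof is correct and essentially identical to the paper's: the paper also starts from a representation $x=\alpha' a+\beta' c$ and subtracts the largest multiple of $c$ from $\alpha'$, phrased as ``choose $t\ge 0$ maximal with $\alpha'-tc\ge 0$'' rather than invoking the division algorithm, but this is the same $t=q=\lfloor \alpha'/c\rfloor$. Your remark that $\gcd(a,c)=1$ is not actually used for existence is also accurate; the paper does not invoke it in the proof either.
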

\begin{proof} Since   $x\in \langle a,c\rangle$, there are some $\alpha', \beta' \in \N_0$ with $x=\alpha' a + \beta' b$.  But also $x=(\alpha' - tc)a + (\beta'+ta)c$ for all integer $t$.  Choose $t\ge 0$ maximal with $\alpha'-tc\ge 0$, set $\alpha=\alpha'-tc, \beta=\beta'+ta$, and observe that $0\le \alpha < c$.\end{proof}\\

We will frequently use the canonical factorization of $x$ in $\langle a,c\rangle$ from Proposition \ref{alpha}, which we call $\alpha(x), \beta(x)$.

Despite the setback of Example \ref{sad}, with an additional restriction, we can solve the membership problem.  Henceforth, we add the following standing hypothesis. \begin{equation}\tag{$\star$}bc-ad=1\end{equation}  Note that $(\star)$ implies that $1=\gcd(a,b)=\gcd(a,c)=\gcd(b,d)=\gcd(c,d)=1$.  Hence, condition $(\star)$ alone implies  $\phi$-minimality on $v,w$, and also $d_2(M)=1$.


\begin{theorem} With notation as above, $s\in S$ if and only if both\begin{enumerate} \item $0\le\frac{x}{y}\le \frac{c}{d}$; and \item $x\in\langle a,c\rangle$.\end{enumerate}
\end{theorem}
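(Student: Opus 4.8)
The plan is to obtain the ``only if'' direction almost for free, since both of its necessary conditions were already recorded just above the statement, and to concentrate the work on the ``if'' direction, where the standing hypothesis $(\star)$ is what makes the difference. For the forward direction: if $s \in S$ then the mediant inequalities give $0 \le \phi(s) \le \phi(w) = c/d$, and $x \in \langle a,c\rangle$ because $u = \vect{0}{1}$ contributes nothing to the first coordinate, so $x$ is a nonnegative combination of $a$ and $c$ --- exactly what was observed in the paragraph preceding the theorem.

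For the converse, assume (1) and (2). I would first clear away the degenerate cases: $s = 0$ lies in $S$ trivially, and if $s \ne 0$ with $y = 0$ then (1) forces $\phi(s)$ finite, hence $d = 0$, hence $c = 1$ by $(\star)$, and $s = x\vect{1}{0} \in S$. So assume $y \ge 1$, in which case condition (1) is simply $cy \ge dx$. Now apply Proposition \ref{alpha} (valid because $a,c \ge 1$ and $\gcd(a,c) = 1$ by $(\star)$) to write $x = \alpha a + \beta c$ with $\alpha = \alpha(x)$, $\beta = \beta(x) \in \N_0$ and $0 \le \alpha < c$. The target is the explicit factorization
$$\vect{x}{y} = \alpha\vect{a}{b} + \beta\vect{c}{d} + (y - \alpha b - \beta d)\vect{0}{1},$$
whose first coordinate is $\alpha a + \beta c = x$ by construction, and which certifies $s \in S$ as soon as the coefficient $y - \alpha b - \beta d$ is a nonnegative integer. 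So the entire proof reduces to the single inequality $y \ge \alpha b + \beta d$.

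To establish it, multiply by $c$, substitute $\beta c = x - \alpha a$, and use $(\star)$ in the form $bc - ad = 1$:
$$c(\alpha b + \beta d) = \alpha bc + d(x - \alpha a) = \alpha(bc - ad) + dx = \alpha + dx.$$
In particular $c \mid \alpha + dx$, so $cy - dx \equiv -dx \equiv \alpha \pmod c$, while condition (1) gives $cy - dx \ge 0$. A nonnegative integer congruent to $\alpha$ modulo $c$ with $0 \le \alpha < c$ must be at least $\alpha$; hence $cy - dx \ge \alpha$, i.e.\ $cy \ge \alpha + dx = c(\alpha b + \beta d)$, and dividing by $c \ge 1$ gives $y \ge \alpha b + \beta d$, completing the proof.

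The step I expect to be the crux --- indeed, essentially all the content --- is this last inequality. The point of using the canonical representation of Proposition \ref{alpha}, with $0 \le \alpha < c$, is precisely that it forces $\alpha$ to be the least nonnegative residue of $cy - dx$ modulo $c$, which is exactly what promotes the trivial bound $cy - dx \ge 0$ to $cy - dx \ge \alpha$. And $(\star)$ genuinely cannot be dropped: without $bc - ad = 1$ the displayed identity becomes $c(\alpha b + \beta d) = \alpha(bc - ad) + dx$ with an uncontrolled multiplier on $\alpha$, and Example \ref{sad} shows the conclusion then fails.
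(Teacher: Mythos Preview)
Your proof is correct and follows essentially the same plan as the paper: both directions reduce exactly as you describe, Proposition~\ref{alpha} supplies the canonical pair $(\alpha,\beta)$ with $0\le\alpha<c$, and everything hinges on the single inequality $y\ge\alpha b+\beta d$, after which the explicit factorization $s=(y-\alpha b-\beta d)u+\alpha v+\beta w$ finishes. The only difference is in how that inequality is established: the paper argues by contradiction, assuming $y\le\alpha b+\beta d-1$ and manipulating to get $\frac{x}{y}>\frac{c}{d}$, whereas you give a direct argument via the identity $c(\alpha b+\beta d)=\alpha+dx$ and the congruence $cy-dx\equiv\alpha\pmod c$; your route is arguably cleaner and makes the role of the bound $\alpha<c$ more transparent. (One small wording slip: in your degenerate case $y=0$, condition~(1) does not force $\phi(s)$ finite---rather, $\phi(s)=+\infty$ forces $\phi(w)=+\infty$, hence $d=0$; your conclusion is correct even though the phrasing is inverted.)
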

\begin{proof} 
If $s\in S$, both conditions are easily seen to hold.

Suppose now that the two conditions hold.  Take $\alpha,\beta$ as in Proposition \ref{alpha}. We now prove that $y\ge \alpha b + \beta d$.  Supposing otherwise, we have $y\le\alpha b + \beta d -1$.  Since $\alpha < c$, $-\alpha>-c$, and hence $(ad-bc)\alpha > -c$.  Adding $\beta cd$ to both sides, with a bit of algebra we get $\alpha ad + \beta c d > \alpha b c +\beta c d - c$, or $\frac{\alpha a + \beta c}{\alpha b + \beta d - 1}>\frac{c}{d}$.  But then $\frac{x}{y}>\frac{c}{d}$, which contradicts hypothesis.  Hence $y\ge \alpha b + \beta d$.  Then we write $s=(y-\alpha b - \beta d)\vect{0}{1}+\alpha \vect{a}{b} + \beta \vect{c}{d}$, and hence $s\in S$.\end{proof}\\

We turn now to the elasticity problem.  The different factorizations of $s$ in $S$ all come from different factorizations of $x$ in $\langle a,c\rangle$, by the following.

\begin{lemma}\label{delta} With notation as above, given $\alpha',\beta' \in \N_0$ with $x=\alpha'a + \beta' c$, there is exactly one $\delta=\delta(\alpha',\beta')\in\Z$ with $s=\delta u + \alpha' v + \beta' w$.
\end{lemma}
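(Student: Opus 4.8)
The plan is to expand the vector equation $s=\delta u+\alpha' v+\beta' w$ coordinate by coordinate and observe that, because $u=\vect{0}{1}$, the two coordinate equations decouple: the first involves only $\alpha',\beta'$ (and is exactly the hypothesis), while the second is affine-linear in $\delta$ with leading coefficient $1$, hence has a unique integer solution.

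Concretely, I would first compute $\delta u+\alpha' v+\beta' w=\vect{\alpha' a+\beta' c}{\delta+\alpha' b+\beta' d}$. Comparing with $s=\vect{x}{y}$, the first-coordinate equation reads $x=\alpha' a+\beta' c$, which holds by hypothesis and places no constraint on $\delta$. The second-coordinate equation reads $y=\delta+\alpha' b+\beta' d$, i.e. $\delta=y-\alpha' b-\beta' d$.

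To finish, I would note that $y,\alpha',\beta',b,d\in\Z$, so $\delta:=y-\alpha' b-\beta' d$ is a well-defined integer (possibly negative), giving existence; and since the second-coordinate equation determines $\delta$ linearly with coefficient $1$, this value is the only one satisfying $s=\delta u+\alpha' v+\beta' w$, giving uniqueness. There is essentially no obstacle here: the only point worth flagging is that $\delta$ need not be nonnegative, which is why the statement asserts $\delta\in\Z$ rather than $\delta\in\N_0$, and this is precisely why the number of genuine factorizations of $s$ in $S$ can be strictly smaller than the number of factorizations of $x$ in $\langle a,c\rangle$.
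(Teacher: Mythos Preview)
Your proof is correct and follows essentially the same approach as the paper: both expand the second coordinate of $s=\delta u+\alpha' v+\beta' w$ to obtain $y=\delta+\alpha' b+\beta' d$ and solve uniquely for $\delta=y-\alpha' b-\beta' d\in\Z$. Your version is simply more explicit about verifying the first coordinate and about the integrality of $\delta$, and your closing remark about $\delta$ possibly being negative matches the paper's own observation that only when $\delta\ge 0$ does this yield a genuine factorization in $S$.
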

\begin{proof} If $s=\delta u + \alpha' v + \beta' w$, then $y=\delta + \alpha' b + \beta' d$. We solve for $\delta$ uniquely.  If $\delta\ge 0$, then $s=\delta u + \alpha' v + \beta' w$ is a factorization of $s$ in $S$. \end{proof}\\

Henceforth, we define function $\delta(\alpha,\beta)$, applying Lemma \ref{delta} to the factorization from Proposition \ref{alpha}.

We call a factorization of $s$ \emph{extreme} if it is either of minimal or maximal length.  The extreme factorizations are given in the following theorem; there are two cases based on whether $\frac{x}{y}$ is in $(0,\frac{a}{b}]$ or $[\frac{a}{b},\frac{c}{d})$.  Recall that $\lfloor z\rfloor$ denotes the greatest integer that is less than or equal to $z$.

\begin{theorem}\label{factor} With notation as above, the extreme factorizations of $s$ are $$s=(\delta - t) u + (\alpha + ct) v + (\beta - a t) w$$
for $t=0$ and for $$t=\begin{cases} \lfloor \frac{\beta}{a} \rfloor & \frac{x}{y}\le  \frac{a}{b}\\\delta & \frac{x}{y}\ge \frac{a}{b}\end{cases}.$$
These extreme factorizations have lengths $\delta+\alpha+\beta$ and $$\begin{cases}\delta+\alpha+\beta + \lfloor \frac{\beta}{a}\rfloor (c-a-1) & \frac{x}{y}\le  \frac{a}{b}\\ \delta+\alpha + \beta + \delta(c-a-1) & \frac{x}{y}\ge \frac{a}{b}\end{cases},$$ respectively.
\end{theorem}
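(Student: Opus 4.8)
The plan is to parametrize all factorizations of $s$ in $S$ by a single integer $t$, observe that the length is an affine function of $t$, conclude that the extreme factorizations sit at the two ends of the admissible range of $t$, and then identify those two ends in terms of $\phi(s)$ versus $\phi(v)$.

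For the parametrization: by Lemma \ref{delta}, every factorization of $s$ in $S$ comes from a factorization $x=\alpha'a+\beta'c$ of $x$ in $\langle a,c\rangle$ with $\alpha',\beta'\in\N_0$, the coefficient $\delta(\alpha',\beta')$ of $u$ being forced. Starting from the canonical factorization $x=\alpha a+\beta c$ of Proposition \ref{alpha} (so $0\le\alpha<c$), and using that $(\star)$ forces $\gcd(a,c)=1$, any other factorization of $x$ is $(\alpha',\beta')=(\alpha+ct,\,\beta-at)$ for a unique $t\in\Z$. A one-line check with $(\star)$ — the second coordinate of $(\delta-t)u+(\alpha+ct)v+(\beta-at)w$ equals $\delta+\alpha b+\beta d+t(bc-ad)-t=y$ — shows $\delta(\alpha+ct,\beta-at)=\delta-t$. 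Hence every factorization of $s$ has the stated form $s=(\delta-t)u+(\alpha+ct)v+(\beta-at)w$ for a unique $t\in\Z$, and $t$ is admissible exactly when $\delta-t\ge0$, $\alpha+ct\ge0$, and $\beta-at\ge0$. Since $0\le\alpha<c$ and $a\ge1$, this is precisely $0\le t\le T$, where $T:=\min(\lfloor\beta/a\rfloor,\delta)$; the range is nonempty since $\delta\ge0$ (because $s\in S$, by the membership theorem above) and $\beta\ge0$.

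The length of the factorization indexed by $t$ is $(\delta-t)+(\alpha+ct)+(\beta-at)=(\delta+\alpha+\beta)+t(c-a-1)$, which is affine in $t$. An affine function on a finite interval of integers attains its extreme values at the endpoints, so — regardless of the sign of $c-a-1$, and trivially if it vanishes — the minimal and maximal length factorizations occur at $t=0$ and $t=T$. Evaluating gives length $\delta+\alpha+\beta$ at $t=0$ and length $\delta+\alpha+\beta+T(c-a-1)$ at $t=T$.

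It remains to evaluate $T$. The key computation, once more using $(\star)$, is
$$bx-ay=b(\alpha a+\beta c)-a(\delta+\alpha b+\beta d)=\beta(bc-ad)-a\delta=\beta-a\delta.$$
Because $b\ge1$ and $y\ge1$ for any nonzero $s\in S$ (as $\phi(s)\le c/d$ is finite), the condition $\tfrac{x}{y}\le\tfrac{a}{b}$ is equivalent to $bx-ay\le0$, i.e.\ $a\delta\ge\beta$, i.e.\ $\delta\ge\lceil\beta/a\rceil\ge\lfloor\beta/a\rfloor$, whence $T=\lfloor\beta/a\rfloor$; symmetrically $\tfrac{x}{y}\ge\tfrac{a}{b}$ gives $a\delta\le\beta$, so $\delta\le\lfloor\beta/a\rfloor$ and $T=\delta$. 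Substituting into the length formula of the previous paragraph yields the two asserted expressions. I expect the main obstacle to be the bookkeeping of the first step: verifying that $\gcd(a,c)=1$ lets the single parameter $t$ capture every factorization of $x$, and that the three nonnegativity constraints on $t$ collapse exactly to $0\le t\le T$ given $0\le\alpha<c$. The rest is the algebra of the two displayed identities.
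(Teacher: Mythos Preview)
Your proof is correct and follows essentially the same line as the paper's: parametrize all factorizations by the single integer $t$ (via $\gcd(a,c)=1$), compute $\delta_t=\delta-t$ using $(\star)$, observe the length is affine in $t$, and compare the two upper bounds $\lfloor\beta/a\rfloor$ and $\delta$ via the sign of $bx-ay$. Your identity $bx-ay=\beta-a\delta$ is a slightly tidier packaging of the same comparison the paper carries out by expanding $\alpha ab+\beta cb$ versus $\alpha ab+\beta ad+\delta a$.
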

\begin{proof} Note that, since $\gcd(a,c)=1$, all factorizations of $x$ in $\langle a,c\rangle$  are given by $x= (\alpha + ct) a + (\beta - a t) c$, for various integer $t$.  Note that $\alpha + c t\ge 0$ precisely when $t\ge 0$, by our choice of $\alpha$.

By Lemma \ref{delta}, for each choice of $t$ there is a unique $\delta_t=\delta(\alpha + ct,\beta-at)$ with $s=\delta_t u + (\alpha+ct)v + (\beta-at)w$.  Hence $y=\delta_t + (\alpha+ct)b + (\beta-at)d=\delta_t +\alpha b + \beta d + t$, so $\delta_t=y-\alpha b - \beta d - t$.  The factorization length (of $s$ in $S$) is $(\alpha + ct)+ (\beta - at) + (y - \alpha b - \beta d -t) = (\alpha + \beta + y-\alpha b-\beta d) + t(c-a-1)$.  In particular, the length varies linearly with $t$; one extreme is when $t=0$, and the other is when $t$ is maximal.

There are two upper bounds on $t$, both of which must hold.  One is that $\beta-at\ge 0$ (else the coefficient of $w$ would not be in $\N_0$), while the other is that $0\le \delta_t=y-\alpha b - \beta d - t=\delta - t$.  Now we compare the two bounds of $\frac{\beta}{a}$ and $\delta$.  We have $\frac{\beta}{a}\le \delta$ exactly when $\alpha a b + \beta c b \le \alpha a b + \beta a d + \delta a$, which holds exactly when $xb\le ya$ or $\frac{x}{y}\le \frac{a}{b}$.  In this case, we use the $\frac{\beta}{\alpha}$ bound and get the other for free; in the other case it is the reverse.

Substituting $t=0$ and $t=\lfloor \frac{\beta}{a}\rfloor$ (or $t=\delta$), we find the lengths as above.
\end{proof}\\

Note that the sign of $c-a-1$ determines which of the two extreme factorizations is minimal and which is maximal.  In particular, we have the following.

\begin{corollary} With notation as above, if $c=a+1$, then $\rho(S)=1$.
\end{corollary}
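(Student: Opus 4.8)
The plan is to read the conclusion straight off Theorem~\ref{factor}. Fix any $s=\vect{x}{y}\in S\setminus\{0\}$. Since $s\in S$ we have $\mathsf{Z}(s)\neq\emptyset$, and from the proof of Theorem~\ref{factor} every factorization of $s$ in $S$ has the form $(\delta_t,\alpha+ct,\beta-at)$ for some integer $t$ with all three coordinates nonnegative, and it has length $(\alpha+\beta+y-\alpha b-\beta d)+t(c-a-1)$. The key observation is that the hypothesis $c=a+1$ makes $c-a-1=0$, so this length is independent of $t$.

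Concretely, I would invoke Theorem~\ref{factor} to say the extreme (minimal and maximal) factorization lengths are $\delta+\alpha+\beta$ and, in the two cases, $\delta+\alpha+\beta+\lfloor\frac{\beta}{a}\rfloor(c-a-1)$ or $\delta+\alpha+\beta+\delta(c-a-1)$; with $c-a-1=0$ both of these collapse to $\delta+\alpha+\beta$. Hence $\max(\mathsf{L}(s))=\min(\mathsf{L}(s))=\delta+\alpha+\beta$, so $\mathsf{L}(s)$ is a singleton and $\rho(s)=1$.

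Since this holds for every $s\in S\setminus\{0\}$, and $S\setminus\{0\}\neq\emptyset$ (e.g.\ $u\in S\setminus\{0\}$), we conclude $\rho(S)=\sup\{\rho(s):s\in S\setminus\{0\}\}=1$.

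There is essentially no obstacle here; the corollary is an immediate specialization of Theorem~\ref{factor}. The only point worth spelling out is that when $c-a-1$ vanishes the two "extreme" factorizations genuinely have equal length, rather than merely trading the roles of minimal and maximal (which is what the sign of $c-a-1$ governs in general), so that the entire set of lengths degenerates to a single value.
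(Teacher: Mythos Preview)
Your argument is correct and follows the same approach as the paper: both invoke Theorem~\ref{factor} and observe that $c-a-1=0$ forces $|\mathsf{L}(s)|=1$ for every nonzero $s\in S$, whence $\rho(S)=1$. The paper's proof is simply a one-line version of what you have written out in detail.
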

\begin{proof} By Theorem \ref{factor}, each $s\in S$ has $|\mathsf{L}(\mathbf{s})|=1$.\end{proof}\\

\begin{corollary} With notation as above, we fix $a,b,c,d,x,\alpha,\beta$ and suppose that $\beta(x)<a$.  Then, for every $y\ge \frac{bx}{a}$, $\rho(\vect{x}{y})=1$.
\end{corollary}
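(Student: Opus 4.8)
The plan is to recognize that the hypothesis $y\ge\frac{bx}{a}$ is precisely the inequality $\frac{x}{y}\le\frac{a}{b}$ that places us in the first case of Theorem \ref{factor}, and that $\beta(x)<a$ forces the relevant floor to vanish, so that $s=\vect{x}{y}$ admits only one factorization length.

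First I would check that $s\in S$, which is what makes Theorem \ref{factor} applicable. Since $\alpha(x),\beta(x)$ are defined (Proposition \ref{alpha}), we have $x\in\langle a,c\rangle$, which is condition (2) of the membership theorem. Rearranging $y\ge\frac{bx}{a}$ gives $ay\ge bx$, i.e. $\frac{x}{y}\le\frac{a}{b}$; combined with $\frac{a}{b}=\phi(v)<\phi(w)=\frac{c}{d}$ this yields $0\le\frac{x}{y}\le\frac{c}{d}$, which is condition (1). Hence $s\in S$. (The degenerate element $s=0$, occurring only when $x=y=0$, is excluded from consideration of $\rho$.)

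Next, the inequality $\frac{x}{y}\le\frac{a}{b}$ puts us in the first case of Theorem \ref{factor}: the extreme factorizations of $s$ occur at $t=0$ and at $t=\lfloor\frac{\beta}{a}\rfloor$, with lengths $\delta+\alpha+\beta$ and $\delta+\alpha+\beta+\lfloor\frac{\beta}{a}\rfloor(c-a-1)$ respectively. By hypothesis $\beta=\beta(x)<a$, and $\beta\in\N_0$, so $0\le\frac{\beta}{a}<1$ and therefore $\lfloor\frac{\beta}{a}\rfloor=0$. Thus the two extreme factorization lengths coincide, both equalling $\delta+\alpha+\beta$, so $\min(\mathsf{L}(s))=\max(\mathsf{L}(s))$ and $\rho(\vect{x}{y})=1$.

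I do not expect a real obstacle here: the statement is a direct specialization of Theorem \ref{factor}, entirely parallel to the preceding corollary. The only points requiring any care are the routine manipulation translating the denominator bound $y\ge\frac{bx}{a}$ into the $\phi$-inequality $\frac{x}{y}\le\frac{a}{b}$ — noting that the same computation simultaneously yields membership in $S$, which is what makes Theorem \ref{factor} available — and quietly dismissing the trivial case $s=0$.
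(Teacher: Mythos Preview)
Your proof is correct and follows the same approach as the paper's: translate $y\ge\frac{bx}{a}$ into $\frac{x}{y}\le\frac{a}{b}$, observe that $\beta(x)<a$ forces $\lfloor\beta/a\rfloor=0$, and conclude from Theorem~\ref{factor} that the extreme lengths coincide. Your version is slightly more thorough in that you explicitly verify $s\in S$ via the membership theorem before invoking Theorem~\ref{factor}, a point the paper's terse proof leaves implicit.
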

\begin{proof} Our hypotheses force $\frac{x}{y}\le \frac{a}{b}$ and $\lfloor \frac{\beta}{a}\rfloor=0$.  Although $\delta$ will vary based on $y$, all factorizations of $\vect{x}{y}$ have the same length. \end{proof}\\

\section{Multiples of $s\in S$}

We now fix $s\in S$, and consider factorizations of $ks=\vect{kx}{ky}\in S$ for various $k\in\N$.  For any individual $k$, we can of course compute $\rho(ks)$ using Theorem \ref{factor}, but we seek $\rho(ks)$, or estimates thereto, for all the various choices of $k$.  We offer three such results, two specific and one general.  For convenience, we recall the sign function given by $$\textrm{sign}(z)=\begin{cases}1 & z>0\\ 0 & z=0 \\ -1 & z<0\end{cases}.$$

Our special results determine $\rho(ks)$ exactly, independently of $k$, but are for periodic values of $k$ only.  There are two, based on whether or not $\frac{x}{y}\le \frac{a}{b}$.


\begin{theorem}\label{special1} With notation as above, set $\tau=\textrm{sign}(c-a-1)$. Suppose thet $ac|k$ and  $\frac{x}{y}\le \frac{a}{b}$. Then $$\rho(ks)=\left(\frac{c}{a}\frac{ya-x(b-1)}{yc-x(d-1)}\right)^\tau.$$
\end{theorem}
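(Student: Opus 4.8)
The plan is to apply Theorem~\ref{factor} directly to $ks \in S$. Since $\phi(ks) = \phi(s)$, we have $\frac{kx}{ky} = \frac{x}{y} \le \frac{a}{b}$, so we land in the first case of that theorem. The first step is to identify the canonical factorization $\alpha(kx), \beta(kx)$ of $kx$ in $\langle a,c\rangle$ from Proposition~\ref{alpha}. Writing $k = ac\ell$ with $\ell \in \N$ and reducing $kx = \alpha a + \beta c$ modulo $c$, the hypothesis $\gcd(a,c)=1$ forces $\alpha \equiv 0 \pmod{c}$, and since $0 \le \alpha(kx) < c$ this pins down $\alpha(kx) = 0$, hence $\beta(kx) = kx/c = a\ell x$. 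In particular $a \mid \beta(kx)$, so $\lfloor \beta(kx)/a \rfloor = \ell x$ with no floor correction. This is where the hypothesis $ac \mid k$ is essential.

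Next I would substitute $\alpha = 0$, $\beta = a\ell x$, and $\delta = \delta(0, a\ell x) = ky - a\ell x d = a\ell(cy - xd)$ (via Lemma~\ref{delta}) into the extreme-length formulas of Theorem~\ref{factor}. The $t = 0$ length is $\delta + \alpha + \beta = a\ell\bigl(cy - x(d-1)\bigr) =: L_1$, and the $t = \lfloor\beta/a\rfloor$ length is $L_1 + \ell x(c-a-1)$, which I would simplify using the standing hypothesis $(\star)$ in the form $ad + 1 = bc$: the cross terms cancel and it becomes $L_2 := \ell c\bigl(ay - x(b-1)\bigr)$. A short computation then gives $L_2/L_1 = \frac{c}{a}\cdot\frac{ya - x(b-1)}{yc - x(d-1)}$, which is exactly the bracketed quantity in the statement. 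To know $\rho(ks)$ is well defined I would also check $L_1, L_2 > 0$; this follows from $y \ge 1$ (forced by $\frac{x}{y} \le \frac{a}{b} < \infty$, since $b \ge 1$ under $(\star)$) together with $\frac{x}{y} < \frac{c}{d}$ and $\frac{x}{y} \le \frac{a}{b}$ respectively.

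Finally, Theorem~\ref{factor} shows the factorization length of $ks$ is affine in $t$ with slope $c - a - 1$ over the integer range $0 \le t \le \lfloor\beta/a\rfloor$, so $\min\mathsf{L}(ks)$ and $\max\mathsf{L}(ks)$ are exactly $L_1$ and $L_2$, with the roles assigned according to $\textrm{sign}(c-a-1) = \tau$. Hence $\rho(ks)$ equals $L_2/L_1$ if $\tau = 1$, equals $L_1/L_2$ if $\tau = -1$, and equals $1$ if $\tau = 0$ (i.e.\ $c = a+1$), which in all three cases is $(L_2/L_1)^\tau$, as claimed.

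I do not expect a genuine obstacle. The two points requiring care are (i) confirming that $\alpha(kx) = 0$, $\beta(kx) = a\ell x$ is actually the Proposition~\ref{alpha} representative and not merely some factorization of $kx$, and (ii) the $(\star)$-driven algebraic collapse of the longer length into the form $\ell c(ay - x(b-1))$; both are routine. It is also worth noting the degenerate case $x = 0$ (so $s$ is a multiple of $u$), where $L_1 = L_2$ and the formula correctly returns $1$ for every value of $\tau$.
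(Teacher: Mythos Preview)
Your proposal is correct and follows essentially the same route as the paper: write $k=ac\ell$, identify $\alpha(kx)=0$ and $\beta(kx)=a\ell x$, compute $\delta$, and read off the two extreme lengths from Theorem~\ref{factor}. In fact you supply more detail than the paper's own proof, which stops after writing the two lengths as $a\ell(cy-(d-1)x)$ and $a\ell(cy-(d-1)x)+\ell x(c-a-1)$ and leaves the $(\star)$-simplification, positivity, and $\tau$-sign bookkeeping to the reader.
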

\begin{proof} Let $k'\in\N$ with $k=k'ac$.  We have $\alpha(kx)=0$ and $\beta=\beta(kx)=k'ax$. We calculate $\delta=\delta(0,\beta)=ky-\beta d=ak'(cy-dx)$.  One of the extreme factorization lengths will be $\delta+\beta=ak'(cy-dx)+ak'x=ak'(cy-(d-1)x)$.  The other will be $\delta+\beta+\lfloor \frac{\beta}{a}\rfloor(c-a-1)=ak'(cy-(d-1)x)+k'x(c-a-1)$.
\end{proof}\\

We now give our second special result, for the case of $k$ a multiple of $c$ and $\frac{x}{y}\ge \frac{a}{b}$.  Note that again the elasticity is independent of $k$.

\begin{theorem}\label{special2} With notation as above, set $\tau=\textrm{sign}(c-a-1)$. Suppose thet $c|k$ and  $\frac{x}{y}\ge \frac{a}{b}$. Then $$\rho(ks)=\left(c\frac{y(c-a)-x(d-b)}{yc-x(d-1)}\right)^{\tau} .$$
\end{theorem}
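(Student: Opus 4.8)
The plan is to mirror the proof of Theorem \ref{special1}, substituting the case $\frac{x}{y}\ge \frac{a}{b}$, for which Theorem \ref{factor} tells us the relevant cap on $t$ is $t=\delta$ rather than $t=\lfloor \beta/a\rfloor$. First I would write $k=k'c$ with $k'\in\N$ and compute the canonical factorization of $kx$ in $\langle a,c\rangle$. Since $\alpha(x),\beta(x)$ are the canonical data for $x$, multiplying by $c$ gives $kx=c\alpha a + (c\beta+k'\text{(something)})c$... more carefully: from $x=\alpha a+\beta c$ we get $kx=k'c\alpha a + k'c\beta c$, and reducing the $a$-coefficient modulo $c$ via Proposition \ref{alpha} shows $\alpha(kx)\equiv k'c\alpha\equiv 0\pmod c$, so $\alpha(kx)=0$ and $\beta(kx)=k'(\alpha a+\beta c)=k'x$. (This is the step where I must be careful that the reduction lands exactly on $0$; it does because $c\mid k$ forces the $a$-coefficient to be a multiple of $c$.)

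Next I would compute $\delta=\delta(kx,ky)=\delta(0,k'x)$. By Lemma \ref{delta}, $ky=\delta+\alpha(kx)b+\beta(kx)d=\delta+k'xd$, so $\delta=ky-k'xd=k'(yc-xd)$. The shorter of the two extreme factorization lengths (the $t=0$ one) is $\delta+\alpha(kx)+\beta(kx)=\delta+k'x=k'(yc-xd)+k'x=k'(yc-x(d-1))$. The other extreme length, using $t=\delta$ from Theorem \ref{factor}, is $\delta+\alpha(kx)+\beta(kx)+\delta(c-a-1)=k'(yc-x(d-1))+k'(yc-xd)(c-a-1)$. Simplifying the coefficient: $k'\big[(yc-xd)+(yc-xd)(c-a-1)\big]=k'(yc-xd)(c-a)$. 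So the two extreme lengths are $k'(yc-x(d-1))$ and $k'(yc-xd)(c-a)=k'\big(y(c-a)c - x(cd-ad)\big)$; using $(\star)$, $cd-ad=d(c-a)+(ad-ad)$... actually I should expand directly: $(yc-xd)(c-a)=yc(c-a)-xd(c-a)=y(c^2-ac)-x(cd-ad)$, and since $bc-ad=1$ gives $ad=bc-1$, we get $cd-ad=cd-bc+1=c(d-b)+1$, so this equals $c\big(y(c-a)-x(d-b)\big)-x$. Hence the long extreme length is $k'\big[c(y(c-a)-x(d-b))-x\big]$, which is exactly $c$ times the expression in the numerator of the claimed ratio, minus $k'x$...

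Here is where I expect the main obstacle: matching the algebra so the ratio of the two extreme lengths is precisely $\left(c\frac{y(c-a)-x(d-b)}{yc-x(d-1)}\right)^{\tau}$. The factor $k'$ cancels, so $\rho(ks)=\max/\min$ of $\{k'(yc-x(d-1)),\,k'(yc-xd)(c-a)\}$ divided, and I must verify $(yc-xd)(c-a)=c(y(c-a)-x(d-b))-x\cdot 0$... let me recheck: $(yc-xd)(c-a)$ versus $c(y(c-a)-x(d-b))$. Expanding the latter: $yc(c-a)-xc(d-b)=yc^2-yac-xcd+xbc$. Expanding the former: $yc^2-yac-xcd+xad$. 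These differ by $xbc-xad=x(bc-ad)=x$ by $(\star)$. So $(yc-xd)(c-a)=c(y(c-a)-x(d-b))-x$, not quite the clean numerator. I would then need to reconcile this with the stated formula — likely by re-examining whether the short extreme length is $yc-x(d-1)$ or whether a shift makes both sides match; possibly the intended shorter length pairs differently, or the formula in the theorem absorbs the $-x$ elsewhere. Resolving this bookkeeping discrepancy, and confirming the exponent $\tau$ correctly records which extreme is the max (via $\mathrm{sign}(c-a-1)$ exactly as in Theorem \ref{factor}), is the crux; once the two extreme lengths are pinned down the elasticity is immediate as their ratio, and the independence from $k$ follows since $k'$ cancels.
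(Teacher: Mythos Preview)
Your approach is exactly the paper's: write $k=k'c$, observe $\alpha(kx)=0$ and $\beta(kx)=k'x$, compute $\delta=k'(cy-dx)$, and read off the two extreme lengths from Theorem~\ref{factor}. The paper in fact stops after displaying the two lengths and does not carry out the simplification you attempt.

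The ``discrepancy'' you found is a bookkeeping slip, not a real obstacle. When you simplify the second extreme length you write
\[
k'\bigl[(yc-xd)+(yc-xd)(c-a-1)\bigr],
\]
but the first summand is $yc-x(d-1)=(yc-xd)+x$, not $yc-xd$. Keeping that $+x$, the second length is
\[
k'\bigl[(yc-xd)(c-a)+x\bigr].
\]
Now your own identity $(yc-xd)(c-a)=c\bigl(y(c-a)-x(d-b)\bigr)-x$ (which is correct, via $(\star)$) shows the $-x$ and $+x$ cancel, giving the second length as $k'\,c\bigl(y(c-a)-x(d-b)\bigr)$. Dividing by the first length $k'(yc-x(d-1))$ yields the stated ratio on the nose, with $\tau=\mathrm{sign}(c-a-1)$ sorting out which extreme is the maximum. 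No further reconciliation is needed.
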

\begin{proof} Let $k'\in\N$ with $k=k'c$.  We have $\alpha(kx)=0$ and $\beta=\beta(kx)=k'x$. We calculate $\delta=\delta(0,\beta)=ky-\beta d=k'(cy-dx)$.  One of the extreme factorization lengths will be $\delta+\beta=k'(cy-dx)+k'x=k'(cy-(d-1)x)$.  The other will be $\delta+\beta+\delta(c-a-1)=k'(cy-(d-1)x)+k'(cy-dx)(c-a-1)$.
\end{proof}\\

The following is a general result for all $k$.  In particular, it implies that $\rho(ks)$  is largely predicted by $\phi(s)$, with this prediction becoming more accurate as $k\to\infty$.  Note also that the limiting values agree, as expected, with the  values in Theorems \ref{special1}, \ref{special2}.

\begin{theorem}\label{general} With notation as above, set $\tau=\textrm{sign}(c-a-1)$.  Then 
$$\lim_{k\to \infty} \rho(ks)=\begin{cases}\left(\frac{c}{a}\frac{ya-x(b-1)}{yc-x(d-1)}\right)^{\tau} & \frac{x}{y}\le \frac{a}{b}\\
\left(c\frac{y(c-a)-x(d-b)}{yc-x(d-1)}\right)^{\tau} & \frac{x}{y}\ge \frac{a}{b}\end{cases}.$$
\end{theorem}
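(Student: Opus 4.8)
The plan is to leverage Theorem \ref{factor}, which already gives exact closed forms for the two extreme factorization lengths of any $s'\in S$. Applying it to $s'=ks$ with $\frac{kx}{ky}=\frac{x}{y}$, the case split on $\frac{x}{y}\le\frac{a}{b}$ versus $\frac{x}{y}\ge\frac{a}{b}$ is exactly the split in the statement, so it suffices to analyze each case separately and compute $\lim_{k\to\infty}$ of the ratio of the two lengths (or its reciprocal, according to $\tau$). First I would record the relevant growth rates: $\alpha(kx)$ stays bounded (it lies in $[0,c)$), so $\alpha(kx)=O(1)$, while $\beta(kx)=\frac{kx-\alpha(kx)a}{c}$ grows linearly in $k$, and $\delta(\alpha(kx),\beta(kx))=ky-\alpha(kx)b-\beta(kx)d$ also grows linearly in $k$. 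Thus both extreme lengths from Theorem \ref{factor} are (piecewise-)linear-plus-bounded in $k$, and $\rho(ks)$ is a ratio of two such quantities; the limit is the ratio of their leading coefficients.

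In the case $\frac{x}{y}\le\frac{a}{b}$, the maximal-vs-minimal lengths are $\delta+\alpha+\beta$ and $\delta+\alpha+\beta+\lfloor\beta/a\rfloor(c-a-1)$ with $\beta=\beta(kx)$, $\delta=\delta(\alpha(kx),\beta(kx))$, $\alpha=\alpha(kx)$. Dividing numerator and denominator by $k$ and letting $k\to\infty$: the $\alpha$ terms vanish, $\beta/k\to x/c$, $\lfloor\beta/a\rfloor/k\to x/(ac)$, and $\delta/k\to y-\frac{x}{c}d=\frac{cy-dx}{c}$ (using $\alpha/k\to0$). Substituting and simplifying $\frac{(\delta+\beta)/k}{(\delta+\beta+\lfloor\beta/a\rfloor(c-a-1))/k}$ should collapse — after clearing the common factor $1/c$ and multiplying through by $a$ — to $\frac{a(yc-x(d-1))}{c(ya-x(b-1))}$ or its reciprocal; the $\tau$ exponent then picks out which of max/min sits on top, matching the claimed $\left(\frac{c}{a}\frac{ya-x(b-1)}{yc-x(d-1)}\right)^\tau$. (A useful sanity check: the algebra here is the same as that already carried out in the proof of Theorem \ref{special1}, just taken in the limit rather than along the subsequence $ac\mid k$; indeed one can argue that along $ac\mid k$ the $\lfloor\cdot\rfloor$ is exact and $\alpha=0$, so the limit must agree with the constant value of Theorem \ref{special1}, and a $\liminf/\limsup$ squeeze over general $k$ using $\beta/a - 1 < \lfloor\beta/a\rfloor \le \beta/a$ then forces the general limit to that same value.) The case $\frac{x}{y}\ge\frac{a}{b}$ is handled identically, now with extreme lengths $\delta+\alpha+\beta$ and $\delta+\alpha+\beta+\delta(c-a-1)$; dividing by $k$ and using $\delta/k\to\frac{cy-dx}{c}$, $\beta/k\to x/c$ gives, after simplification, $\left(c\frac{y(c-a)-x(d-b)}{yc-x(d-1)}\right)^\tau$, and consistency with Theorem \ref{special2} along $c\mid k$ is again the check.

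The one genuinely delicate point is the floor function in the first case: $\lfloor\beta(kx)/a\rfloor$ is not linear in $k$, only linear up to a bounded error. I would handle this by the squeeze indicated above — bounding $\lfloor\beta/a\rfloor$ between $\beta/a-1$ and $\beta/a$, forming the resulting upper and lower estimates for $\rho(ks)$, and noting both converge to the same limit as $k\to\infty$ (since the $O(1)$ discrepancy is divided by a quantity growing linearly in $k$). One should also note that for all large $k$ both conditions of the membership Theorem hold for $ks$ (the first because $\phi(ks)=\phi(s)$ is unchanged and $\frac{x}{y}\le\frac{c}{d}$ already, the second because $kx\in\langle a,c\rangle$ once $kx$ exceeds the Frobenius number), so $ks\in S$ and $\rho(ks)$ is defined; and that the denominators $yc-x(d-1)$, $ya-x(b-1)$, $y(c-a)-x(d-b)$ are strictly positive under the standing hypotheses, so the limits are genuine positive rationals. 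The remaining work is purely the routine algebraic simplification sketched above, essentially identical to the computations already displayed in Theorems \ref{special1} and \ref{special2}.
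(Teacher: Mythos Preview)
Your proposal is correct and follows essentially the same route as the paper: apply Theorem~\ref{factor} to $ks$, divide the two extreme lengths by $k$, use that $\alpha(kx)\in[0,c)$ is bounded while $\beta$ and $\delta$ grow linearly, and pass to the limit. If anything you are more careful than the paper---the paper silently replaces $\tfrac{1}{k}\lfloor\beta/a\rfloor$ by its limit $x/(ac)$ without comment, whereas you spell out the squeeze $\beta/a-1<\lfloor\beta/a\rfloor\le\beta/a$; and your remark that $ks\in S$ for all $k$ is immediate since $s\in S$ is the standing assumption (the Frobenius detour is unnecessary).
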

\begin{proof} We set $\alpha=\alpha(kx), \beta=\beta(kx)$, with $kx=\alpha a + \beta c$, and $0\le \alpha < c$. Note that $\beta=\frac{kx-\alpha a}{c}$.  We calculate $\delta=ky-\alpha b - \beta d=ky-\alpha b - (kx-\alpha a)\frac{d}{c} = k(y-x\frac{d}{c})-\alpha(b-\frac{ad}{c})= k(y-x\frac{d}{c})-\frac{\alpha}{c}$.

Rather than taking $\rho(ks)$ as the ratio of $\max \mathsf{L}(ks)$ to $\min \mathsf{L}(ks)$, we will instead take $\rho$ as the ratio of $\frac{1}{k}\max \mathsf{L}(ks)$ to $\frac{1}{k}\min \mathsf{L}(ks)$.  One of these will be $\frac{1}{k}(\delta+\alpha+\beta)=\frac{1}{k}\left( k(y-x\frac{d}{c})-\frac{\alpha}{c} + \alpha+\frac{kx-\alpha a}{c}\right)=y-x\frac{d-1}{c}+\frac{\alpha}{k}\frac{c-a-1}{c}$.  In the limit, the last term vanishes, leaving $y-x\frac{d-1}{c}$.

We consider the case of  $\frac{x}{y}\le \frac{a}{b}$.  The other term we will have in our ratio limit will be $\frac{1}{k}\left(\delta+\alpha+\beta + \lfloor \frac{\beta}{a}\rfloor (c-a-1)\right)=y-x\frac{d-1}{c}+\frac{\alpha}{k}\frac{c-a-1}{c}+ \frac{1}{k}\lfloor \frac{\beta}{a}\rfloor (c-a-1)$
Now, $\frac{\beta}{a}=k\frac{x}{ac}-\frac{\alpha}{c}$.  In the limit we will get  $y-x\frac{d-1}{c}+\frac{x}{ac}(c-a-1)$.  We simplify to $y-x\frac{b-1}{a}$.  This gives the first formula.

Finally, we turn to the case of $\frac{x}{y}\ge \frac{a}{b}$.  The other term we will have in our ratio limit will be $\frac{1}{k}\left(\delta+\alpha+\beta + \delta(c-a-1)\right)=y-x\frac{d-1}{c}+\frac{\alpha}{k}\frac{c-a-1}{c}+ \frac{c-a-1}{k}\left(  k(y-x\frac{d}{c})-\frac{\alpha}{c}\right)$.  In the limit we will get $y-x\frac{d-1}{c}+ (c-a-1)(y-x\frac{d}{c}) = (c-a)y- (d-b) x$.  This gives the second formula. \end{proof}\\

We close by noting that the functions appearing in Theorems \ref{special1}, \ref{special2}, and \ref{general} are quite simple, being linear  fractional transformations in the variable $\frac{x}{y}=\phi(s)$.


\begin{thebibliography}{10}

\bibitem{MR1181420}
W.~A. Adkins and S.~H. Weintraub.
\newblock {\em Algebra}, volume 136 of {\em Graduate Texts in Mathematics}.
\newblock Springer-Verlag, New York, 1992.
\newblock An approach via module theory.

\bibitem{MR3602830}
T.~Barron, C.~O'Neill, and R.~Pelayo.
\newblock On the set of elasticities in numerical monoids.
\newblock {\em Semigroup Forum}, 94(1):37--50, 2017.

\bibitem{MR3255016}
L.~Crawford, V.~Ponomarenko, J.~Steinberg, and M.~Williams.
\newblock Accepted elasticity in local arithmetic congruence monoids.
\newblock {\em Results Math.}, 66(1-2):227--245, 2014.

\bibitem{MR3741074}
J.~I. Garc\'{i}a-Garc\'{i}a, M.~A. Moreno-Fr\'{i}as, and A.~Vigneron-Tenorio.
\newblock Proportionally modular affine semigroups.
\newblock {\em J. Algebra Appl.}, 17(1):1850017, 17, 2018.

\bibitem{Overview_Invariants}
P.~A. Garc\'{i}a-S\'{a}nchez.
\newblock An overview of the computational aspects of nonunique factorization
  invariants.
\newblock {\em Springer Proceedings in Mathematics \& Statistics \&
  Multiplicative Ideal Theory and Factorization Theory}, pages 159--181, 2016.

\bibitem{MR3035125}
M.~Jenssen, D.~Montealegre, and V.~Ponomarenko.
\newblock Irreducible factorization lengths and the elasticity problem within
  {$\Bbb{N}$}.
\newblock {\em Amer. Math. Monthly}, 120(4):322--328, 2013.

\bibitem{MR3503387}
C.~Kiers, C.~O'Neill, and V.~Ponomarenko.
\newblock Numerical semigroups on compound sequences.
\newblock {\em Comm. Algebra}, 44(9):3842--3852, 2016.

\bibitem{MR3579669}
A.~Mahdavi and F.~Rahmati.
\newblock On the {F}robenius vector of some simplicial affine semigroups.
\newblock {\em Bull. Belg. Math. Soc. Simon Stevin}, 23(4):573--582, 2016.

\bibitem{philipp_2010}
A.~Philipp.
\newblock A characterization of arithmetical invariants by the monoid of
  relations.
\newblock {\em Semigroup Forum}, 81(3), 2010.

\bibitem{MR1694173}
J.~C. Rosales and P.~A. Garc\'{i}a-S\'anchez.
\newblock {\em Finitely generated commutative monoids}.
\newblock Nova Science Publishers, Inc., Commack, NY, 1999.

\bibitem{Unique_Betti}
P.~A.~G. S\'{a}nchez, I.~Ojeda, and J.~C. Rosales.
\newblock Affine semigroups having a unique betti element.
\newblock {\em Journal of Algebra and Its Applications}, 12(03):1250177, 2013.

\end{thebibliography}

\end{document}